\newtheorem{prp}{Proposition}[section]
\newtheorem{thm}{Theorem}[section]
\newenvironment{proof}{{\noindent\it Proof. }}{\hfill$\Box$\\}
\newcommand{\Abar}{{\backslash\kern-7pt A}}
\title{How to specify an approximate numerical result}
\author{Nicolas \textsc{Bouleau}
          \footnote{Ecole Nationale des Ponts et Chauss\'ees, ParisTech, Paris 75007 France.}}
\author{Nicolas Bouleau}         
\date{---}
\begin{document}
%

\maketitle
\noindent classification :{ 60Hxx, 31C25, 94B70, 65Gxx.}

\begin{abstract}     
The Dirichlet forms methods, in order to represent errors and their propagation, are particularly powerful in infinite dimensional problems such as models involving stochastic analysis encountered in finance or physics, cf. \cite{NBlivre}. Now, coming back to the finite dimensional case, these methods give a new light on the very classical concept of `numerical approximation' and suggest changes in the habits. We show that for some kinds of approximations only an Ito-like second order differential calculus is relevant to describe and propagate numerical errors through a mathematical model. We call these situations {\it strongly stochastic}. The main point of this work is an argument based on the {\it arbitrary functions principle} of Poincar\'e-Hopf showing that the errors due to measurements with graduated instruments are strongly stochastic. Eventually we discuss the consequences of this phenomenon on the specification of an approximate numerical result.
\end{abstract}

\section{The dichotomy of small errors.}

Let us begin by showing that there are two kinds of small errors which do not propagate according to the same differential calculus.

Suppose two applied mathematicians A and B attempt to perform stochastic simulation rigourously. By means of the well known inversion and rejection methods, they are able to simulate any probability law provided that they can pick up a real number in the unit interval $[0,1]$ randomly.

For this, the researcher A chooses the method of drawing the binary digits by heads or tails. The researcher B, instead, prefers using a Polya's urn. 

Let us compare the biases and the variances of the errors in the two procedures.

\underline{In the case A}, the real number 
$$x=0,a_1a_2a_3\cdots =\sum_{k=1}^\infty \frac{a_k}{2^k}\quad\qquad a_k\in\{0,1\}$$
is approximated by $x_n=\sum_{k=1}^n\frac{a_k}{2^k}.$

 Denoting $\mathcal{F}_n$ the $\sigma$-field generated by $a_1, \cdots,a_n$, the bias of the error is 
$$b_n=\mathbb{E}[(x-x_n)|\mathcal{F}_n]=\sum_{k=n+1}^\infty\frac{1/2}{2^k}=\frac{1}{2^{n+1}}$$
and the variance of the error is 
$$v_n=\mathbb{E}[(x-x_n)^2|\mathcal{F}_n]-(\mathbb{E}[(x-x_n)|\mathcal{F}_n])^2=\frac{1}{3}\frac{1}{4^n}-\frac{1}{4}\frac{1}{4^n}=\frac{1}{12}\frac{1}{4^n}.$$

For \underline{the case B}, let us recall the principle of Polya's urn : there is at the beginning a white ball and a black ball in the urn and each time a ball is drawn from the urn, it is put back into the urn together with an other ball of the same colour. 

After $n$ drawings, the proportion $X_n$ of white balls in the urn is given by $$(n+2)X_{n}=(n+1)X_{n-1}+1_{\{U_{n}\leq X_{n-1}\}}$$
where $U_n$ is uniformly distributed on $[0,1]$ independent of $\mathcal{F}_{n-1}=\sigma(X_0,\ldots, X_{n-1})$. In other words
$$X_n=X_{n-1}+\frac{1}{n+2}(1_{\{U_{n}\leq X_{n-1}\}}-X_{n-1}).$$
$X_n$ is a bounded martingale which converges a.s. and in $L^p$, $p\in[1,\infty[$, to a random variable $X_\infty$ uniformly distributed on $[0,1]$ as easily seen when the initial configuration of the urn is one white ball and one black ball.

For the bias we have
$$b_n=\mathbb{E}[X_\infty-X_n|\mathcal{F}_n]=0$$ and for the variance
$$v_n=\mathbb{E}[(X_\infty-X_n)^2|\mathcal{F}_n]$$ we have $\mathbb{E}[v_n]=\frac{1}{6n}+o(1/n).$

We see that in case A the variances are smaller than the biases, while in case B the biases are smaller than the variances.

How will these errors propagate through the simulations of our two Monte Carlo practioners ?

A Taylor expansion on a $\mathcal{C}^3$-function with bounded derivatives gives 
$$
\begin{array}{rl}
f(X)-f(X_n)=(X-X_n)f^\prime(X_n)&+\frac{1}{2}(X-X_n)^2f^{\prime\prime}(X_n)\\
&+\frac{1}{6}(X-X_n)^3f^{\prime\prime\prime}(X_n+\theta(X-X_n))
\end{array}$$
$$\mbox{ new bias }=\mathbb{E}[f(X_\infty)-f(X_n)|\mathcal{F}_n]=
b_nf^\prime(X_n)+\frac{1}{2}(v_n+b_n^2)f^{\prime\prime}(X_n)+o(v_n)$$
$$
\begin{array}{rl}
\mbox{ new variance }&= \mathbb{E}[(f(X_\infty)-f(X_n))^2|\mathcal{F}_n]-(\mathbb{E}[f(X_\infty)-f(X_n)|\mathcal{F}_n])^2\\
&=(v_n-b_n^2)f^{\prime 2}(X_n)+o(v_n).
\end{array}$$
We can distinguish three cases

1) If the variance is negligible with respect to the bias, $v_n\ll b_n$, (case of researcher A), the dominant term for the bias is asymptotically the first one. $\mathbb{E}[(f(X_\infty)-f(X_n))^2|\mathcal{F}_n]$ is negligible with respect to $\mathbb{E}[f(X_\infty)-f(X_n)|\mathcal{F}_n]$ and the situation will be carried on. It is enough to use the formula
\begin{equation}
\mathbb{E}[f(X_\infty)-f(X_n)|\mathcal{F}_n]=b_nf^\prime(X_n)+o(b_n)
\end{equation}

2) If the variance is of the same order of magnitude as the bias, the situation will persist. The propagation formulae are
\begin{equation}
\left.
\begin{array}{rl}
\mathbb{E}[f(X_\infty)-f(X_n)|\mathcal{F}_n]&=b_nf^\prime(X_n)+\frac{1}{2}v_nf^{\prime\prime}(X_n)+o(v_n)\\
\mathbb{E}[(f(X_\infty)-f(X_n))^2|\mathcal{F}_n]&=v_nf^{\prime 2}(X_n)+o(v_n)
\end{array}
\right\}
\end{equation}

3) If the bias is negligible in comparison to the variance, $b_n\ll v_n$, (case of researcher B), the main term in the bias becomes $\frac{1}{2}v_nf^{\prime\prime}(X_n)$ and we fall in the case 2) where biases and variances remain of the same order of magnitude.\\

We see that a first order differential calculus is relevant for the researcher A.
But instead, the researcher B (with Polya's urn) must perform an error calculus involving both biases and variances, and 

- the error calculus on the variances is a first order differential calculus,

- the error calculus on the biases is a second order differential calculus and uses the calculus on variances.\\

The first case will be called the {\it weakly stochastic} case. Then the usual differential calculus is enough to propagate errors and to assess the sensitivity of the model outputs to data.

The second case will be called {\it strongly stochastic}. Then the propagation of biases (which is important in non-linear models) needs an Ito-like differential calculus given by formulae (1.2).\\

\noindent{\bf Comment.} In practice, generally, we do not control the nature of the errors. In modelling, errors on the data are {\it exogenous}, we know few from where they come. It is wise to think according to the second case, especially to take in account the randomness of the errors through the non-linearities of the model.\\

Let us go deeper into the mathematical arguments by displaying the bias operators and the variance operator (the Dirichlet form) associated with an approximation.

\section{The bias operators and the Dirichlet form
 associated with an approximation.}
 When two random variables $Y$ and $Y_n$ are close together, the asymptotic behaviour of 
 $$\mathbb{E}[(\phi(Y_n)-\phi(Y))\chi(Y)]$$
 and of 
 $$\mathbb{E}[(\phi(Y_n)-\phi(Y))\chi(Y_n)]$$
 where $\phi$ and $\chi$ are test functions, are generally different. As a consequence several bias operators have to be distinguished (cf. \cite{JFA}) :
 
 Let $Y$ be a random variable  defined on $(\Omega, \mathcal{A}, \mathbb{P})$ with values in a measurable space $(E,\mathcal{F})$ and let $Y_n$ be approximations also defined on $(\Omega, \mathcal{A}, \mathbb{P})$ with values in $(E,\mathcal{F})$. We consider an algebra $\mathcal{D}$ of bounded functions from $E$ into $\mathbb{R}$ or $\mathbb{C}$ containing the constants and dense in $L^2(E,\mathcal{F},\mathbb{P}_Y)$ and a sequence $\alpha_n$ of positive numbers. With $\mathcal{D}$ and $(\alpha_n)$ we consider the four following assumptions defining the four bias operators 
$$
(\mbox{H}1)\qquad\left\{\begin{array}{l}
\forall \varphi\in{\cal D}, \mbox{ there exists } \overline{A}[\varphi]\in L^2(E,{\cal F},\mathbb{P}_Y)\quad s.t. \quad\forall \chi\in{\cal D}\\
\lim_{n\rightarrow\infty} \alpha_n\mathbb{E}[(\varphi(Y_n)-\varphi(Y))\chi(Y)]=\mathbb{E}_Y[\overline{A}[\varphi]\chi].
\end{array}\right.
$$
$$
(\mbox{H}2)\qquad\left\{\begin{array}{l}
\forall \varphi\in{\cal D}, \mbox{ there exists } \underline{A}[\varphi]\in L^2(E,{\cal F},\mathbb{P}_Y)\quad s.t. \quad\forall \chi\in{\cal D}\\
\lim_{n\rightarrow\infty} \alpha_n\mathbb{E}[(\varphi(Y)-\varphi(Y_n))\chi(Y_n)]=\mathbb{E}_Y[\underline{A}[\varphi]\chi].
\end{array}\right.
$$
$$
(\mbox{H}3)\quad\left\{\begin{array}{l}
\forall \varphi\in{\cal D}, \mbox{ there exists } \widetilde{A}[\varphi]\in L^2(E,{\cal F},\mathbb{P}_Y)\quad s.t. \quad\forall \chi\in{\cal D}\\
\lim_{n\rightarrow\infty} \alpha_n\mathbb{E}[(\varphi(Y_n)-\varphi(Y))(\chi(Y_n)-\chi(Y))]=-2\mathbb{E}_Y[\widetilde{A}[\varphi]\chi].
\end{array}\right.
$$
$$
(\mbox{H}4)\quad\left\{\begin{array}{l}
\forall \varphi\in{\cal{D}}, \mbox{ there exists } \Abar[\varphi]\in L^2(E,{\mathcal{F}},\mathbb{P}_Y)\quad s.t. \quad\forall \chi\in{\cal{D}}\\
\lim_{n\rightarrow\infty} \alpha_n\mathbb{E}[(\varphi(Y_n)-\varphi(Y))(\chi(Y_n)+\chi(Y))]=2\mathbb{E}_Y[\Abar[\varphi]\chi].
\end{array}\right.
$$
We first note that as soon as two of hypotheses (H1) (H2) (H3) (H4) are fulfilled (with
 the same algebra ${\cal D}$ and the same sequence $\alpha_n$), the other two follow thanks to the relations
$$\widetilde{A}=\frac{\overline{A}+\underline{A}}{2}\quad\quad\Abar=\frac{\overline{A}-\underline{A}}{2}.$$
When defined, the operator $\overline{A}$ which considers the asymptotic error from the point of view of the limit model, will be called {\it the 
theoretical bias operator}.

The operator $\underline{A}$ which considers the asymptotic error from the point of view of the approximating model will be called {\it the 
practical bias operator}.

Because of the property
$$<\widetilde{A}[\varphi],\chi>_{L^2(\mathbb{P}_Y)}=<\varphi,\widetilde{A}[\chi]>_{L^2(\mathbb{P}_Y)}$$
the operator $\widetilde{A}$ will be called {\it the symmetric bias operator}.

The operator $\Abar$ which is often (see theorem 2.2 below) a first order operator will be called {\it the singular bias operator}.\\

\begin{thm} {\it Under the hypothesis {\rm (H3)},

a) the limit
\begin{equation}\widetilde{\cal E}[\varphi,\chi]=\lim_n  \frac{\alpha_n}{2}\mathbb{E}[(\varphi(Y_n)-\varphi(Y))(\chi(Y_n)-\chi(Y)]\qquad \varphi, \chi\in{\cal D}\end{equation}
defines a closable positive bilinear form whose smallest closed extension is denoted $({\cal E},\mathbb{D})$.

b) $({\cal E},\mathbb{D})$ is a Dirichlet form {\rm(cf. \cite{Fuku})}

c) $({\cal E},\mathbb{D})$ admits a square field operator $\Gamma$ satisfying $\forall \varphi,\chi\in{\cal D}$
\begin{equation}
\Gamma[\varphi]=\widetilde{A}[\varphi^2]-2\varphi\widetilde{A}[\varphi]\end{equation}
\begin{equation}\mathbb{E}_Y[\Gamma[\varphi]\chi]=\lim_n\alpha_n\mathbb{E}[(\varphi(Y_n)-\varphi(Y))^2(\chi(Y_n)+\chi(Y))/2]\end{equation}
\indent d) $({\cal E},\mathbb{D})$ is local if and only if $\forall \varphi\in{\cal D}$
\begin{equation}\lim_n \alpha_n\mathbb{E}[(\varphi(Y_n)-\varphi(Y))^4]=0\end{equation}
this condition is equivalent to
$\quad\exists\lambda>2\quad\lim_n\alpha_n\mathbb{E}[|\varphi(Y_n)-\varphi(Y)|^\lambda]=0.$

e) If the form $({\cal E},\mathbb{D})$  is local, then the} principle of asymptotic error calculus
{\it is valid on 
$\widetilde{\cal D}=\{F(f_1,\ldots,f_p)\;:\;f_i\in{\cal D},\;\;F\in{\cal C}^1(\mathbb{R}^p,\mathbb{R})\}$
i.e.\\

$
\lim_n\alpha_n\mathbb{E}[(F(f_1(Y_n),\ldots,f_p(Y_n))-F(f_1(Y),\ldots,f_p(Y)))^2]$\hfill

\hfill$=\mathbb{E}_Y[\sum_{i,j=1}^p F^\prime_i(f_1,\ldots,f_p)F^\prime_j(f_1,\ldots,f_p)\Gamma[f_i,f_j]].$}
\end{thm}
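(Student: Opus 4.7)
My overall strategy is to exploit the representation $\widetilde{\mathcal{E}}[\varphi,\chi] = -\mathbb{E}_Y[\widetilde{A}[\varphi]\chi]$, immediate from combining (2.1) with (H3), together with the fact that the pre-limit forms $\widetilde{\mathcal{E}}_n[\varphi,\chi] := \tfrac{\alpha_n}{2}\mathbb{E}[(\varphi(Y_n)-\varphi(Y))(\chi(Y_n)-\chi(Y))]$ are already symmetric, positive, and Markovian (and (H3) forces $\widetilde{A}$ itself to be symmetric on $\mathcal{D}$, as its left-hand side is symmetric in the test functions). For (a), symmetry and positivity pass to the limit. For closability, given $\varphi_k\in\mathcal{D}$ with $\varphi_k\to 0$ in $L^{2}(\mathbb{P}_Y)$ and $\widetilde{\mathcal{E}}[\varphi_k-\varphi_l]\to 0$, I would split $\widetilde{\mathcal{E}}[\varphi_k] = \widetilde{\mathcal{E}}[\varphi_k-\varphi_l,\varphi_k] + \widetilde{\mathcal{E}}[\varphi_l,\varphi_k]$, control the first term by Cauchy--Schwarz plus the Cauchy property, and observe that the second equals $-\mathbb{E}_Y[\widetilde{A}[\varphi_l]\varphi_k]\to 0$ for fixed $l$; the usual iteration then forces $\widetilde{\mathcal{E}}[\varphi_k]\to 0$. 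For (b), the elementary inequality $(F(a)-F(b))^{2}\le(a-b)^{2}$ for a normal contraction $F$ gives $\widetilde{\mathcal{E}}_n[F\circ\varphi]\le\widetilde{\mathcal{E}}_n[\varphi]$ before the limit, and the contraction survives in the closure by lower semicontinuity.

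For (c), since $\mathcal{D}$ is an algebra I set $\Gamma[\varphi]:=\widetilde{A}[\varphi^{2}]-2\varphi\widetilde{A}[\varphi]$, and formula (2.4) reduces to the elementary identity
$$(a^{2}-b^{2})(c-d)-2(a-b)(ac-bd)=-(a-b)^{2}(c+d),$$
applied with $a=\varphi(Y_n),\,b=\varphi(Y),\,c=\chi(Y_n),\,d=\chi(Y)$. Multiplying by $\alpha_n$, taking expectation, and invoking (H3) with arguments $(\varphi^{2},\chi)$ and $(\varphi,\varphi\chi)$ converts the left-hand side into $-2\mathbb{E}_Y[\Gamma[\varphi]\chi]$, which forces the right-hand limit to exist and yields (2.4).

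Part (d) is the main technical obstacle. Locality of a Dirichlet form with carré du champ is classically equivalent to the derivation identity $\Gamma[\varphi^{2}]=4\varphi^{2}\Gamma[\varphi]$ on $\mathcal{D}$ (cf.\ \cite{Fuku},\cite{NBlivre}, via the Beurling--Deny decomposition). The key algebraic observation, coming from $(u+v)^{2}=2(u^{2}+v^{2})-(u-v)^{2}$, is
$$(\varphi^{2}(Y_n)-\varphi^{2}(Y))^{2}=2(\varphi(Y_n)-\varphi(Y))^{2}(\varphi^{2}(Y_n)+\varphi^{2}(Y))-(\varphi(Y_n)-\varphi(Y))^{4}.$$
Multiplying by $\alpha_n$, passing to the limit, and applying (2.4) with $\chi\equiv 1$ on the left and $\chi=\varphi^{2}$ on the first right-hand term gives
$$\mathbb{E}_Y\big[\Gamma[\varphi^{2}]-4\varphi^{2}\Gamma[\varphi]\big] = -\lim_n\alpha_n\mathbb{E}[(\varphi(Y_n)-\varphi(Y))^{4}];$$
substituting $\varphi\to\varphi\chi$ for varying $\chi\in\mathcal{D}$ and polarizing upgrades this integral identity to the pointwise derivation identity, thereby yielding the equivalence of locality with (2.5). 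The alternative formulation with $\lambda>2$ is a Hölder interpolation: boundedness of $\varphi\in\mathcal{D}$ gives $|X_n|^{4}\le(2\|\varphi\|_{\infty})^{4-\lambda}|X_n|^{\lambda}$ for $2<\lambda\le 4$, whereas for $\lambda>4$ the bound $\|X_n\|_{4}^{4}\le\|X_n\|_{2}^{2\theta}\|X_n\|_{\lambda}^{\lambda(1-\theta)}$ with $\theta=(\lambda-4)/(\lambda-2)$, combined with $\alpha_n\mathbb{E}[X_n^{2}]=O(1)$, delivers $\alpha_n\mathbb{E}[X_n^{4}]\to 0$.

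Finally, for (e), once the derivation property of $\Gamma$ is in place on the algebra $\mathcal{D}$, the standard Bouleau--Hirsch functional calculus for local Dirichlet forms with carré du champ extends the chain rule from monomials (iterated Leibniz) to $F\in C^{1}$ by a density argument that uses the Markov contraction to transport $F(f_1,\ldots,f_p)$ into $\mathbb{D}$. Specializing (2.4) with $\chi\equiv 1\in\mathcal{D}$ yields $\mathbb{E}_Y[\Gamma[\varphi]]=\lim_n\alpha_n\mathbb{E}[(\varphi(Y_n)-\varphi(Y))^{2}]$; applied to $\varphi=F(f_1,\ldots,f_p)$ together with the chain rule, this is precisely the announced principle of asymptotic error calculus.
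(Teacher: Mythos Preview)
The paper itself does not give a proof of this theorem; it defers entirely to \cite{JFA}. Your proposal therefore stands on its own merits. Parts (a)--(c) are correct and clean: the algebraic identity you use in (c) is exactly the mechanism that produces formula (2.4), and it is the same computation carried out in \cite{JFA}. In (d) your key identity linking $\Gamma[\varphi^{2}]-4\varphi^{2}\Gamma[\varphi]$ to the fourth-moment limit is correct and is the heart of the argument, though the phrase ``substituting $\varphi\to\varphi\chi$ and polarizing upgrades this integral identity to the pointwise derivation identity'' hides real work; one cleaner route is to insert an extra weight $\tfrac{\chi(Y_n)+\chi(Y)}{2}$ into the fourth-moment identity from the outset, so that (2.4) applies termwise with the test function $\chi$ and one obtains directly $\mathbb{E}_Y[(\Gamma[\varphi^{2}]-4\varphi^{2}\Gamma[\varphi])\chi]$ for every $\chi\in\mathcal{D}$, which is dense.

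There is, however, a genuine gap in your treatment of (e). You write: ``Specializing (2.4) with $\chi\equiv 1$ \ldots\ applied to $\varphi=F(f_{1},\ldots,f_{p})$ together with the chain rule, this is precisely the announced principle.'' But (2.4) has only been established for $\varphi\in\mathcal{D}$, and the whole point of (e) is that $F(f_{1},\ldots,f_{p})\in\widetilde{\mathcal{D}}\setminus\mathcal{D}$ in general. Knowing that $F(f_{1},\ldots,f_{p})\in\mathbb{D}$ and that $\Gamma$ obeys the chain rule there is a statement about the \emph{closed} form; it says nothing about the \emph{pre-limit} quantities $\alpha_{n}\mathbb{E}[(F(f(Y_{n}))-F(f(Y)))^{2}]$, which involve $Y_{n}$ and are not encoded in $(\mathcal{E},\mathbb{D})$. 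This is exactly the ``commutativity of limits'' the paper flags after the theorem. The missing argument is to expand $F(f(Y_{n}))-F(f(Y))=\sum_{i}\Delta f_{i}\int_{0}^{1}F'_{i}(f(Y)+t\Delta f)\,dt$ with $\Delta f_{i}=f_{i}(Y_{n})-f_{i}(Y)$, and then show that the remainder obtained by replacing the integral by $F'_{i}(f(Y))$ is negligible after multiplication by $\alpha_{n}$. It is precisely the locality hypothesis, in its fourth-moment form $\alpha_{n}\mathbb{E}[(\Delta f_{i})^{4}]\to 0$, that kills these remainders via Cauchy--Schwarz; without invoking (2.5) at this stage your argument for (e) is circular.
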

The proof of this theorem is given in [6] Theorem 1, Remark 3 and Theorem 2.
The point e) of the theorem is a commutativity of limits, it means that the error on a function of $Y$ may be directly obtained starting from the error on $Y$ by functional calculus.

An operator $B$ from ${\cal D}$ into $L^2(\mathbb{P}_Y)$ will be said to be a {\it first order operator} if it satisfies
$$B[\varphi\chi]=B[\varphi]\chi+\varphi B[\chi]\qquad\forall\varphi,\chi\in{\cal D}$$
\vspace{-.1cm}

\begin{thm} {\it Under} {\rm(H1)} {\it to} {\rm(H4)} {\it 

a) the}  theoretical variance $\lim_n\alpha_n\mathbb{E}[(\varphi(Y_n)-\varphi(Y))^2\psi(Y)]$ {\it and  the } practical variance
$\lim_n\alpha_n\mathbb{E}[(\varphi(Y_n)-\varphi(Y))^2\psi(Y_n)]$ {\it exist and we have $\forall\varphi,\chi,\psi\in{\cal D}$
$$
\begin{array}{c}
\lim_n\alpha_n\mathbb{E}[(\varphi(Y_n)-\varphi(Y))(\chi(Y_n)-\chi(Y))\psi(Y)]\qquad\qquad\qquad\qquad\\
\qquad\qquad\qquad\qquad=\mathbb{E}_Y[-\underline{A}[\varphi\psi]\chi+\underline{A}[\psi]\varphi\chi
-\overline{A}[\varphi]\chi\psi]\\
\lim_n\alpha_n\mathbb{E}[(\varphi(Y_n)-\varphi(Y))(\chi(Y_n)-\chi(Y))\psi(Y_n)]\qquad\qquad\qquad\qquad\\
\qquad\qquad\qquad\qquad=\mathbb{E}_Y[-\overline{A}[\varphi\psi]\chi+\overline{A}[\psi]\varphi\chi
-\underline{A}[\varphi]\chi\psi]
\end{array}
$$
\indent \it b) These two variances coincide if and only if $\;\Abar$ is a first order operator, and then are equal to 
$\mathbb{E}_Y[\Gamma[\varphi]\psi].$}
 \end{thm}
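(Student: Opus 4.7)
The plan is to prove each formula in (a) through an algebraic decomposition of the trilinear quantity that exposes combinations directly handled by (H1) and (H2). I would start from the identity
\begin{align*}
(\varphi(Y_n)-\varphi(Y))(\chi(Y_n)-\chi(Y))\psi(Y)
&= [(\varphi\psi)(Y_n)-(\varphi\psi)(Y)]\chi(Y_n)\\
&\quad + (\psi(Y)-\psi(Y_n))(\varphi\chi)(Y_n)\\
&\quad - (\varphi(Y_n)-\varphi(Y))(\chi\psi)(Y),
\end{align*}
verified by expansion (the monomial $\varphi(Y_n)\chi(Y_n)\psi(Y_n)$ cancels between the first two summands and the remaining six terms reconstruct the left-hand side). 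Multiplying by $\alpha_n$ and taking expectation, the first two summands fall under (H2) applied with function $\varphi\psi$ and test $\chi$, and with function $\psi$ and test $\varphi\chi$, producing $-\mathbb{E}_Y[\underline{A}[\varphi\psi]\chi]$ and $\mathbb{E}_Y[\underline{A}[\psi]\varphi\chi]$; the third is handled by (H1) with function $\varphi$ and test $\chi\psi$, contributing $-\mathbb{E}_Y[\overline{A}[\varphi]\chi\psi]$. The second formula (with $\psi(Y_n)$) will follow from the analogous decomposition with $Y$ and $Y_n$ interchanged, which swaps the roles of (H1)/(H2) and of $\overline{A}/\underline{A}$ throughout.

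For (b), substituting $\chi=\varphi$ in both formulas and using $\overline{A}-\underline{A}=2\Abar$, the difference of the two variances will collapse to
$$\lim_n\alpha_n\mathbb{E}[(\varphi(Y_n)-\varphi(Y))^2\psi(Y)] - \lim_n\alpha_n\mathbb{E}[(\varphi(Y_n)-\varphi(Y))^2\psi(Y_n)] = -2\mathbb{E}_Y[\varphi L[\varphi,\psi]],$$
where $L[\varphi,\psi] := \varphi\Abar[\psi]+\psi\Abar[\varphi]-\Abar[\varphi\psi]$ measures the failure of the Leibniz rule for $\Abar$. The \emph{if} direction is then immediate: if $\Abar$ is first order, $L\equiv 0$ and the two variances coincide.

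The \emph{only if} direction is the main technical point. Given $\mathbb{E}_Y[\varphi L[\varphi,\psi]]=0$ for every $\varphi,\psi\in\mathcal{D}$, I would polarize by replacing $\varphi$ with $\varphi+t\eta$ and extracting the coefficient of $t$ to obtain
$$\mathbb{E}_Y[\eta L[\varphi,\psi]] + \mathbb{E}_Y[\varphi L[\eta,\psi]] = 0.$$
Setting $T[a,b,c] := \mathbb{E}_Y[a\,L[b,c]]$, this gives antisymmetry of $T$ in its first two slots, while the symmetry $L[\varphi,\psi]=L[\psi,\varphi]$ gives symmetry of $T$ in the last two. Composing these two involutions yields the cyclic antisymmetry $T[a,b,c]=-T[b,c,a]$, and iterating it three times produces $T[a,b,c]=-T[a,b,c]$, hence $T\equiv 0$. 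Since $\mathcal{D}$ is dense in $L^2(\mathbb{P}_Y)$, this forces $L[\varphi,\psi]=0$ a.s.\ for all $\varphi,\psi\in\mathcal{D}$, i.e.\ $\Abar$ obeys the Leibniz rule. Finally, the common value when the two variances agree is read off from Theorem 2.1 c): the half-sum of the two variances always equals $\mathbb{E}_Y[\Gamma[\varphi]\psi]$, so each individually equals this quantity whenever they coincide.
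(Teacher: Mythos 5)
Your proof is correct and complete. The paper itself gives no proof of this theorem---it defers to Proposition 2 of reference \cite{JFA}---so there is no in-text argument to compare against, but every step of yours checks out against the definitions of Section 2. The three-term identity is verified by expansion exactly as you indicate (the monomial $\varphi(Y_n)\chi(Y_n)\psi(Y_n)$ cancels), and because $\mathcal{D}$ is an algebra the functions $\varphi\psi$, $\varphi\chi$, $\chi\psi$ are admissible, so each summand falls under (H1) or (H2); this gives simultaneously the \emph{existence} of the limits and the stated formulas. The second formula does follow from the first by exchanging $Y$ and $Y_n$, since the product of the two increments is invariant under this exchange while (H1) and (H2) (hence $\overline{A}$ and $\underline{A}$) are swapped. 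For b), taking $\chi=\varphi$ and subtracting gives the difference of the two variances as $-2\mathbb{E}_Y[\varphi L[\varphi,\psi]]$ with $L[\varphi,\psi]=\varphi\Abar[\psi]+\psi\Abar[\varphi]-\Abar[\varphi\psi]$, and your polarization argument is sound: $\mathcal{D}$ is a vector space, so the coefficient of $t$ vanishes, and a trilinear form antisymmetric in slots $(1,2)$ and symmetric in slots $(2,3)$ is killed by the sign of a $3$-cycle applied three times; density of $\mathcal{D}$ in $L^2(\mathbb{P}_Y)$ together with $L[\varphi,\psi]\in L^2(\mathbb{P}_Y)$ (as $\varphi,\psi$ are bounded and $\Abar$ maps $\mathcal{D}$ into $L^2(\mathbb{P}_Y)$) then forces $L\equiv 0$, i.e.\ the Leibniz rule for $\Abar$. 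The identification of the common value is immediate from Theorem 2.1 c), whose formula $\mathbb{E}_Y[\Gamma[\varphi]\chi]=\lim_n\alpha_n\mathbb{E}[(\varphi(Y_n)-\varphi(Y))^2(\chi(Y_n)+\chi(Y))/2]$ is precisely the half-sum of the theoretical and practical variances.
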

 The proof of this result is given in [6] Proposition 2.
 \vspace{.3cm}
 
\noindent{\bf Example: Typical formulae of finite dimensional error calculus.}\\

Let us consider a triplet of real random variables $(Y,Z,T)$ and a real random variable $G$ independent of $(Y,Z,T)$ centered with variance one.
We are interested in the approximation $Y_\varepsilon$ of $Y$ given by
\begin{equation}
Y_\varepsilon=Y+\varepsilon Z+\sqrt{\varepsilon}TG.
\end{equation}
In the multidimensional case, $Y$ is with values in $\mathbb{R}^p$ as $Z$, $T$ is a $p\!\times\!q$-matrix and $G$ is independent of $(Y,Z,T)$ with 
values in $\mathbb{R}^q$, centered, square integrable, such that $\mathbb{E}[G_iG_j]=\delta_{ij}.$\\

\noindent{\bf Operator $\overline{A}$.}

\begin{prp} {\it If $Z$ and $T$ are square integrable, if $\varphi$ is ${\cal C}^2$ bounded with bounded derivatives of first and
second orders ($\varphi\in{\cal C}^2_b$) and if $\chi$ is bounded,
$$\frac{1}{\varepsilon}\mathbb{E}[(\varphi(Y_\varepsilon)-\varphi(Y))\chi(Y)]\rightarrow\mathbb{E}_Y[\overline{A}[\varphi]\chi]$$
where $\overline{A}[\varphi](y)=\mathbb{E}[Z|Y\!=\!y]\varphi^\prime(y)+\frac{1}{2}\mathbb{E}[T^2|Y\!=\!y]\varphi^{\prime\prime}(y)$. 

In the multidimensional case 
$$\overline{A}[\varphi](y)=\mathbb{E}[Z^t|Y\!=\!y]\nabla\varphi(y)+\frac{1}{2}\sum_{ij}\mathbb{E}[(TT^t)_{ij}|Y\!=
\!y]\varphi^{\prime\prime}_{ij}(y).$$}
\end{prp}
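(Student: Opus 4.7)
The natural strategy is a second-order Taylor expansion of $\varphi$ around $Y$, combined with the independence of $G$ from $(Y,Z,T)$. Writing $\Delta := Y_\varepsilon - Y = \varepsilon Z + \sqrt{\varepsilon}\,TG$, Lagrange's formula gives
$\varphi(Y_\varepsilon)-\varphi(Y) = \varphi'(Y)\Delta + \tfrac{1}{2}\varphi''(Y_\star)\Delta^2$
for some random $Y_\star$ lying between $Y$ and $Y_\varepsilon$. Multiplying by $\chi(Y)$, dividing by $\varepsilon$, and taking expectations, the linear term reduces to $\mathbb{E}[\chi(Y)\varphi'(Y)Z] + \varepsilon^{-1/2}\mathbb{E}[\chi(Y)\varphi'(Y)T]\,\mathbb{E}[G]$; the second summand vanishes because $G$ is independent of $(Y,Z,T)$ and centered, and the first, after conditioning on $Y$, yields the drift $\mathbb{E}_Y[\mathbb{E}[Z|Y]\,\varphi'(Y)\chi(Y)]$.

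For the quadratic piece I would first freeze $\varphi''$ at $Y$ and evaluate $\tfrac{1}{2\varepsilon}\mathbb{E}[\chi(Y)\varphi''(Y)\Delta^2]$. Expanding $\Delta^2 = \varepsilon^2 Z^2 + 2\varepsilon^{3/2}ZTG + \varepsilon\, T^2 G^2$, the cross term is killed once more by independence and $\mathbb{E}[G]=0$, the $\varepsilon Z^2$-term is $O(\varepsilon)$ in $L^1$ because $Z\in L^2$, and the $T^2 G^2$-term produces, thanks to $\mathbb{E}[G^2]=1$ and conditioning on $Y$, the contribution $\tfrac{1}{2}\mathbb{E}_Y[\mathbb{E}[T^2|Y]\,\varphi''(Y)\chi(Y)]$. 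Summed, the two pieces give exactly $\mathbb{E}_Y[\overline{A}[\varphi]\chi]$.

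The only real work, and the step I expect to be the main obstacle, is controlling the remainder
$E_\varepsilon := \tfrac{1}{2\varepsilon}\mathbb{E}[\chi(Y)\,(\varphi''(Y_\star)-\varphi''(Y))\,\Delta^2]$.
Set $R_\varepsilon := \varphi''(Y_\star)-\varphi''(Y)$. Since $\mathbb{E}[\Delta^2] = \varepsilon^2\mathbb{E}[Z^2]+\varepsilon\mathbb{E}[T^2]\to 0$, we have $\Delta\to 0$ (hence $Y_\star-Y\to 0$) in probability, and continuity of $\varphi''$ then gives $R_\varepsilon \to 0$ in probability, while the bound $|R_\varepsilon|\leq 2\|\varphi''\|_\infty$ holds uniformly. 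Decomposing $\Delta^2/\varepsilon = \varepsilon Z^2 + 2\sqrt{\varepsilon}\,ZTG + T^2G^2$, the first two summands multiplied by $|R_\varepsilon\chi(Y)|$ are $O(\sqrt{\varepsilon})$ in $L^1$ (Cauchy--Schwarz and the $L^2$-assumption on $Z,T$, plus independence of $G$), while the third is dominated by $2\|\varphi''\|_\infty\|\chi\|_\infty\,T^2G^2\in L^1$. Dominated convergence applied to this last piece, together with $R_\varepsilon\to 0$ in probability, forces $E_\varepsilon\to 0$.

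The multidimensional statement is obtained by the same argument applied coordinate-wise: the gradient replaces $\varphi'$, the Hessian replaces $\varphi''$, the linear term again collapses to $\mathbb{E}_Y[\mathbb{E}[Z^t|Y]\nabla\varphi]$ after noting $\mathbb{E}[G]=0$, and the matrix identity $\mathbb{E}[(TG)(TG)^t\,|\,Y,Z,T] = TT^t$ (using $\mathbb{E}[G_iG_j]=\delta_{ij}$ and independence) produces the trace structure $\sum_{ij}\mathbb{E}[(TT^t)_{ij}|Y]\,\varphi''_{ij}$. The remainder is handled exactly as in the scalar case, relying only on $Z,T\in L^2$ and boundedness of $\varphi''$ and $\chi$.
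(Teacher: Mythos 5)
Your proof is correct and follows essentially the same route as the paper: a second-order Taylor expansion, independence of the centered, normalized $G$ from $(Y,Z,T)$ to kill the cross terms and produce the $\mathbb{E}[T^2|Y]$ (resp. $TT^t$) coefficient, and dominated convergence for the second-order term. The only difference is cosmetic: the paper uses the integral (Taylor--Lagrange) form of the remainder and applies dominated convergence directly, whereas you freeze $\varphi''$ at $Y$ and control the difference $\varphi''(Y_\star)-\varphi''(Y)$ separately via convergence in probability plus domination, which amounts to the same estimate.
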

\begin{proof} Let us give the argument with the notation of the case $q=p=1$. The Taylor-Lagrange formula applied up to second order gives
$$
\begin{array}{c}\frac{1}{\varepsilon}\mathbb{E}[(\varphi(Y_\varepsilon)-\varphi(Y))\chi(Y)]=\mathbb{E}[Z\varphi^\prime(Y)\chi(Y)]\qquad
\qquad\hspace{5.5cm}\\
\qquad\qquad\quad+\frac{1}{2}
\mathbb{E}[(\varepsilon Z^2+2\sqrt{\varepsilon}ZTG+T^2G^2)\qquad\\
\qquad\qquad\qquad\qquad\qquad\int_0^1\int_0^1\varphi^{\prime\prime}(Y+ab(\varepsilon Z+
\sqrt{\varepsilon}TG))2adadb\,\chi(Y)]
\end{array}
$$
(note that $ZTG$ and $T^2G^2\in L^1$ because of the independence) and  this tends by dominated Lebesgue theorem
to
$\mathbb{E}[Z\varphi^\prime(Y)\chi(Y)]+\frac{1}{2}\mathbb{E}[T^2\varphi^{\prime\prime}(Y)\chi(Y)].$
\end{proof}

\noindent{\bf Quadratic form and operator $\widetilde{A}$.}

\begin{prp}{\it  If $Z$ and $T$ are square integrable, if $\varphi$ and $\chi$ are ${\cal C}^1_b$ 
$$\frac{1}{\varepsilon}\mathbb{E}[(\varphi(Y_\varepsilon)-\varphi(Y)(\chi(Y_\varepsilon)-\chi(Y)]
\rightarrow\mathbb{E}[T^2\varphi^\prime(Y)\chi^\prime(Y)]$$
and in the multidimensional case
$$\frac{1}{\varepsilon}\mathbb{E}[(\varphi(Y_\varepsilon)-\varphi(Y)(\chi(Y_\varepsilon)-\chi(Y)]
\rightarrow\mathbb{E}[(\nabla\varphi)^t(Y)TT^t\nabla\chi(Y)].$$}
\end{prp}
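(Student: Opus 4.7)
The plan is to mimic the argument used for Proposition 2.1, but replacing the second-order Taylor-Lagrange expansion by a first-order one applied simultaneously to $\varphi$ and to $\chi$. Writing $\Delta_\varepsilon := \varepsilon Z+\sqrt\varepsilon TG$, the first-order integral remainder yields
$$\varphi(Y_\varepsilon)-\varphi(Y)=\Delta_\varepsilon\,I_\varphi(\varepsilon),\qquad I_\varphi(\varepsilon):=\int_0^1\varphi'\bigl(Y+s\Delta_\varepsilon\bigr)\,ds,$$
and likewise for $\chi$. Multiplying and dividing by $\varepsilon$ gives
$$\frac{1}{\varepsilon}(\varphi(Y_\varepsilon)-\varphi(Y))(\chi(Y_\varepsilon)-\chi(Y))=\bigl(\varepsilon Z^2+2\sqrt\varepsilon ZTG+T^2G^2\bigr)\,I_\varphi(\varepsilon)\,I_\chi(\varepsilon).$$

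The strategy now is to take expectations, dispose of the first two terms, and apply dominated convergence to the third. Since $\|I_\varphi(\varepsilon)\|_\infty\leq\|\varphi'\|_\infty$ and $\|I_\chi(\varepsilon)\|_\infty\leq\|\chi'\|_\infty$, the first term is bounded in $L^1$-norm by $\varepsilon\,\|\varphi'\|_\infty\|\chi'\|_\infty\,\mathbb{E}[Z^2]$ and thus vanishes. For the cross term I would use Cauchy-Schwarz together with the independence of $G$ and $(Z,T)$: $\mathbb{E}|ZTG|\leq\mathbb{E}[Z^2]^{1/2}\mathbb{E}[T^2G^2]^{1/2}=\mathbb{E}[Z^2]^{1/2}\mathbb{E}[T^2]^{1/2}<\infty$, so this term is $O(\sqrt\varepsilon)$ and also vanishes. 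Note that one cannot here appeal to $\mathbb{E}[G]=0$, because $G$ also enters the factors $I_\varphi(\varepsilon),I_\chi(\varepsilon)$; the decay must come from the prefactor $\sqrt\varepsilon$, which is the one mildly subtle point in the argument.

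For the main term, continuity of $\varphi'$ and $\chi'$ together with $\Delta_\varepsilon\to0$ a.s.\ give $I_\varphi(\varepsilon)I_\chi(\varepsilon)\to\varphi'(Y)\chi'(Y)$ a.s., while the integrand is dominated by the integrable random variable $\|\varphi'\|_\infty\|\chi'\|_\infty\,T^2G^2$ (integrability of $T^2G^2$ follows from independence of $G$ from $T$ and the hypothesis $\mathbb{E}[T^2]<\infty$, $\mathbb{E}[G^2]=1$). Dominated convergence then yields
$$\lim_{\varepsilon\to0}\mathbb{E}\bigl[T^2G^2\,I_\varphi(\varepsilon)I_\chi(\varepsilon)\bigr]=\mathbb{E}\bigl[T^2G^2\,\varphi'(Y)\chi'(Y)\bigr],$$
and a final use of the independence of $G$ from $(Y,T)$ reduces the right-hand side to $\mathbb{E}[G^2]\mathbb{E}[T^2\varphi'(Y)\chi'(Y)]=\mathbb{E}[T^2\varphi'(Y)\chi'(Y)]$, which is the claimed limit.

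The multidimensional version is formally identical: the Taylor-Lagrange expansion gives $\varphi(Y_\varepsilon)-\varphi(Y)=\Delta_\varepsilon^{\,t}\int_0^1\nabla\varphi(Y+s\Delta_\varepsilon)\,ds$ with $\Delta_\varepsilon=\varepsilon Z+\sqrt\varepsilon TG$, and expanding the product $\Delta_\varepsilon\Delta_\varepsilon^{\,t}$ one finds that only the term $TGG^tT^t$ survives after division by $\varepsilon$ and passage to the limit; independence of $G$ from $(Y,T)$ together with $\mathbb{E}[G_iG_j]=\delta_{ij}$ then reduces $\mathbb{E}[TGG^tT^t(\cdot)]$ to $\mathbb{E}[TT^t(\cdot)]$, giving the stated quadratic form $\mathbb{E}[(\nabla\varphi)^t(Y)TT^t\nabla\chi(Y)]$. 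The main obstacle is really only bookkeeping in checking integrability of the dominator and handling the cross term $\sqrt\varepsilon\,ZTG$; the rest is routine dominated convergence.
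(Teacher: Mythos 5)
Your proof is correct and follows essentially the same route as the paper, which simply states that the demonstration is ``similar with a first order expansion'' to that of the preceding proposition (first-order Taylor--Lagrange remainder for both $\varphi$ and $\chi$, the cross and $\varepsilon Z^2$ terms killed by square integrability, and dominated convergence plus independence of $G$ for the $T^2G^2$ term). Your write-up merely makes explicit the details the paper leaves to the reader, including the correct observation that the cross term is handled by the $\sqrt{\varepsilon}$ prefactor rather than by $\mathbb{E}[G]=0$.
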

\begin{proof} The demonstration is similar with a first order expansion.\end{proof}
In order to exhibit the operator $\widetilde{A}$, we must examine the conditions of an integration by parts in the preceding
limit. Let us put $\theta_{ij}(y)=\mathbb{E}[(TT^t)_{ij}|Y\!=\!y]$ so that $\mathbb{E}[(\nabla\varphi)^t(Y)TT^t\nabla\chi(Y)]=
\sum_{ij}\mathbb{E}_Y[\varphi^\prime_i\theta_{ij}\chi^\prime_i]$.\\

\begin{prp}{\it If $Z$ and $T$ are square integrable, if for $i,j=1,\ldots,p$ the measure $\theta_{ij}\mathbb{P}_Y$ on $\mathbb{R}^p$
possesses a partial derivative in the sense of distributions $\partial_j(\theta_{ij}\mathbb{P}_Y)$ which is a bounded 
measure absolutely continuous with respect to $\mathbb{P}_Y$, say $\rho_{ij}\mathbb{P}_Y$, then as soon as $\theta_{ij}$ and $\rho_{ij}\in L^2(\mathbb{P}_Y)$ the form $\widetilde{\cal E}[\varphi,\chi]=
\frac{1}{2}\sum_{ij}\mathbb{E}_Y[\varphi^\prime_i\theta_{ij}\chi^\prime_j]$
is closable on the algebra ${\cal D}={\cal C}^2_b$, hypotheses} {\rm(H1)} {\it to} {\rm(H4)} {\it are fulfilled and 
$$\widetilde{A}[\varphi]=\frac{1}{2}\sum_{ij}\theta_{ij}\varphi^{\prime\prime}_{ij}+\frac{1}{2}\sum_{ij}\rho_{ij}\varphi^\prime_j.$$}
\end{prp}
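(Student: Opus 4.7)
The plan is to piggyback on Proposition 2.2, which already gives the existence and value of the bilinear limit defining $\widetilde{\mathcal{E}}$ on $\mathcal{C}^1_b\supset\mathcal{C}^2_b$. The problem then reduces to extracting the operator $\widetilde{A}$ from the quadratic form via a distributional integration by parts, and deducing from it that (H1)--(H4) all hold and that the form is closable.

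First, for $\varphi,\chi\in\mathcal{D}=\mathcal{C}^2_b$, Proposition 2.2 yields
$$\widetilde{\mathcal{E}}[\varphi,\chi]=\tfrac{1}{2}\sum_{ij}\int \varphi'_i\,\theta_{ij}\,\chi'_j\,d\mathbb{P}_Y.$$
The next step is to perform an integration by parts in the $j$-th variable, transferring the derivative off $\chi$ and onto $\varphi$. By hypothesis, $\partial_j(\theta_{ij}\mathbb{P}_Y)=\rho_{ij}\mathbb{P}_Y$ in the sense of distributions; since $\partial_j(\theta_{ij}\mathbb{P}_Y)$ is assumed to be a \emph{bounded} (signed) measure, the identity $\int \partial_j\psi\,\theta_{ij}\,d\mathbb{P}_Y=-\int \psi\,\rho_{ij}\,d\mathbb{P}_Y$ extends from $\psi\in\mathcal{C}^\infty_c$ to any $\psi\in\mathcal{C}^1_b$ by a standard truncation/approximation argument. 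Apply this with $\psi=\chi\varphi'_i$; the Leibniz rule $\partial_j(\chi\varphi'_i)=\chi'_j\varphi'_i+\chi\varphi''_{ij}$ then gives
$$\int \varphi'_i\,\theta_{ij}\,\chi'_j\,d\mathbb{P}_Y=-\int \chi\bigl[\theta_{ij}\varphi''_{ij}+\rho_{ij}\varphi'_i\bigr]\,d\mathbb{P}_Y.$$

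Summing $\tfrac{1}{2}\sum_{ij}$ and exploiting the symmetry $\theta_{ij}=\theta_{ji}$ (to re-index the lower-order term into the form stated in the proposition), one obtains $\widetilde{\mathcal{E}}[\varphi,\chi]=-\mathbb{E}_Y[\widetilde{A}[\varphi]\chi]$ with the announced expression for $\widetilde{A}[\varphi]$. The $L^2(\mathbb{P}_Y)$ hypotheses on $\theta_{ij}$ and $\rho_{ij}$ combined with the boundedness of $\varphi'_i$ and $\varphi''_{ij}$ ensure $\widetilde{A}[\varphi]\in L^2(\mathbb{P}_Y)$, so the defining limit in (H3) is realized. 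Proposition 2.1 already secures (H1) with the same rate $\alpha_n=1/\varepsilon$ and with $\overline{A}$ given by the explicit formula there; the algebraic relations $\widetilde{A}=(\overline{A}+\underline{A})/2$ and $\Abar=(\overline{A}-\underline{A})/2$ recorded just before Theorem 2.1 then force $\underline{A}=2\widetilde{A}-\overline{A}$ and $\Abar=\overline{A}-\widetilde{A}$, so (H2) and (H4) hold automatically. Closability of the form follows at once from Theorem 2.1 a).

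The main technical obstacle is the extension of the distributional integration-by-parts identity from $\mathcal{C}^\infty_c$ test functions to $\chi\varphi'_i\in\mathcal{C}^1_b$ (which is not compactly supported). This is precisely where the hypothesis that $\partial_j(\theta_{ij}\mathbb{P}_Y)$ is a \emph{bounded} measure (and not merely a distribution) is essential: finite total variation allows pairing with all bounded continuous functions via dominated convergence applied to a sequence of smooth cutoffs increasing to $1$. Everything else is a bookkeeping exercise in Leibniz' rule and in the relations between the four bias operators.
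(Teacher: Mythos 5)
Your argument is correct and follows essentially the same route as the paper: take the limit of the quadratic form from Proposition 2.2, integrate by parts against $\partial_j(\theta_{ij}\mathbb{P}_Y)=\rho_{ij}\mathbb{P}_Y$ (extending the identity from $\mathcal{C}^\infty_K$ to $\mathcal{C}^2_b$ via the boundedness of that measure), read off $\widetilde{A}$, and then get (H1)--(H4) from (H3) together with (H1) of Proposition 2.1 and closability from Theorem 2.1 a). You merely make explicit the cutoff argument and the ``two hypotheses imply the other two'' bookkeeping that the paper leaves implicit.
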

\begin{proof} We have
$$\sum_{ij}\int\theta_{ij}\varphi^\prime_i\chi^\prime_j\,d\mathbb{P}_Y=
\sum_{ij}\int\theta_{ij}(\partial_j(\varphi_i^\prime\chi)-\varphi^{\prime\prime}_{ij}\chi)d\mathbb{P}_Y$$
and the equality
$$\int\theta_{ij}\partial_j(\varphi^\prime_i\chi)d\mathbb{P}_Y=-\int\varphi^\prime_i\chi\rho_{ij}d\mathbb{P}_Y$$
valid for $\varphi,\chi\in{\cal C}^\infty_K$ extends, under the assumptions of the statement, to 
$\varphi,\chi\in{\cal C}^2_b$. This yields 
$\frac{1}{2}\sum_{ij}\mathbb{E}[\varphi^\prime_i\theta_{ij}\chi^\prime_j]=-\frac{1}{2}\int(\sum_{ij}\theta_{ij}\varphi^{\prime\prime}_{ij}
+\sum_{ij}\rho_{ij}\varphi^\prime_j)\chi\,d\mathbb{P}_Y.$
\end{proof}

The operator $\widetilde{A}$ depends only on $T$, not on $Z$. We obtain $\underline{A}$ by difference :
$$\underline{A}[\varphi]= \frac{1}{2}\sum_{ij}\theta_{ij}\varphi^{\prime\prime}_{ij}
+\sum_j(\sum_i\rho_{ij}-z_j)\varphi^\prime_j$$ where $z_j(y)=\mathbb{E}[Z_j|Y\!=\!y]$. At last, $\Abar$ is first order :
$\Abar[\varphi]=\sum_j(z_j-\frac{1}{2}\sum_i\rho_{ij})\varphi^\prime_j.$

For infinite dimensional examples see \cite{JFA}. \\

We are now able to make more precise the dichotomy of \S 1: we shall say that the approximation is {\it weakly stochastic} if hypotheses H1 to H4 are fulfilled and  $\widetilde{A}=0$ and hence $\Abar=
\overline{A}=-\underline{A}$. 

And the approximation will be said {\it strongly stochastic} if hypotheses H1 to H4 are fulfilled and $\widetilde{A}\neq 0$ hence the Dirichlet form (cf. Thm 2.1) is not nought.

\section{The usual norm-based numerical analysis revisited.}
For boundary value problems or optimization problems etc. the resolution by approximation is often displayed in numerical analysis in the following manner : 

The data are represented by a function $f$ in some functional space $F$, the parameters of the problem are represented by a point $\lambda$ in a suitable space $\Lambda$ and the mathematical solution of the problem writes 
$$g=\Phi(f,\lambda).$$ The solution belongs to the space $G$ when $f\in F$ and $\lambda\in\Lambda$. Then, the analysis of the functional $\Phi$ yields norm estimates of the form :
\begin{equation}
\left.
\begin{array}{c}
\|f_n-f\|_F\leq\alpha\\
\|\lambda_n-\lambda\|_\Lambda\leq\beta
\end{array}
\right\}
\Longrightarrow \|g_n-g\|_G\leq\xi(\alpha,\beta, n)
\end{equation}
for some function $\xi$, which assures the convergence of the resolution procedure.

It has to be emphasized that such a reasoning supposes that the premises of (3.1) be fullfilled. The error $f-f_n$ is thought deterministically. The possible randomness of the error and the behaviour of its bias through the functional $\Phi$ are not taken in account in this approach.\\

\noindent{\bf Remark.} a) When the problem is {\it purely mathematical}, the above difficulty may, most often, be considered of secondary importance. Indeed, if the function $f$ and parameters $\lambda$ are random, we may consider that the problem is solved as soon as we are able to compute {\it the law} of the output $g$ or to have an approximation of it (if there were also randomness in the functional $\Phi$, our aim would be to get the joint law of $(f,\lambda, g)$, we move away this case which is similar for simplicity). Now for this, it is enough that an approximation $f_n$ and $\lambda_n$ of $f$ and $\lambda$ yields an approximation $g_n$ of $g$ in probability. In other words, estimates of the form (3.1) in probability are sufficient to solve the problem:
\begin{equation}
\left.
\begin{array}{c}
\forall\delta>0,\;\exists\varepsilon>0, \;\mbox{ s.t.}\hspace{6cm}\\
\mathbb{P}\{\|f_n-f\|_F\leq\alpha; \|\lambda_n-\lambda\|_\Lambda\leq\beta\}\geq1-\varepsilon\hspace{3cm}\\
\hspace{3cm}\Rightarrow\mathbb{P}\{\|g_n-g\|_G\leq\xi(\alpha,\beta, n)\}\geq1-\delta
\end{array}
\right\}
\end{equation}
then the law of $g$ may be approximated by Monte Carlo methods, because we are allowed to choose the sample $f$ and parameter $\lambda$ as we want provided they follow the right probability law.

This can be said otherwise : from a mathematical point of view, most often, the sensitivity of $g$ to the input $f$ may be thought {\it globally}. Estimate like (3.2) will be usually obtained by inequalities similar to (3.1) but in the sense of spaces like $L^p(\Omega,\mathcal{A},\mathbb{P}; F)$, $L^p(\Omega,\mathcal{A},\mathbb{P}; \Lambda)$ and $L^p(\Omega,\mathcal{A},\mathbb{P}; G)$.

b) But different is the situation where $f$ comes from an experiment. For example the temperatures, the wind velocities, etc. in a meteorological model. In such cases, the data $f$ is imposed, known with some precision, and the question whether the errors are weakly or strongly stochastic is relevant. In the first case the sensitivity analysis reduces to a derivation (in a suitable sense between suitable spaces), in the second case a second order Ito-like calculus is compulsory.

The importance of this discussion is reinforced by the results of the next section.
\newpage
\section{The errors due to the graduation of measuring instruments are strongly stochastic.}
Suppose $Y$ is a real quantity measured with a graduated instrument. Let $Y_n$ be the approximation of $Y$ to the nearest graduation, i.e.
$$Y_n=\frac{[nY]}{n}+\frac{1}{2n}$$
($[x]$ denotes the integral part of $x$, and $\{x\}=x-[x]$ the fractional part).

\setlength{\unitlength}{0.5pt}
\begin{picture}(700,300)(35,10)
\put(20,30){\line(1,0){270}}
\put(50,20){\line(0,1){250}}
\put(224,20){\line(0,1){250}}
\put(200,20){\line(0,1){250}}
\put(176,20){\line(0,1){250}}
\put(152,20){\line(0,1){250}}
\put(128,20){\line(0,1){250}}
\put(50,30){\line(1,1){220}}
\put(116,108){\circle*{3}}
\put(116,108){\line(1,0){24}}
\put(140,132){\circle*{3}}
\put(140,132){\line(1,0){24}}
\put(164,156){\circle*{3}}
\put(164,156){\line(1,0){24}}
\put(188,180){\circle*{3}}
\put(188,180){\line(1,0){24}}
\put(212,204){\circle*{3}}
\put(212,204){\line(1,0){24}}
\put(60,250){$Y_n$}
\put(30,15){O}\put(260,40){$Y$}
\put(370,130){\line(1,0){270}}
\put(400,60){\line(0,1){210}}
\put(466,142){\circle*{3}}
\put(466,142){\line(1,-1){24}}
\put(490,142){\circle*{3}}
\put(490,142){\line(1,-1){24}}
\put(514,142){\circle*{3}}
\put(514,142){\line(1,-1){24}}
\put(538,142){\circle*{3}}
\put(538,142){\line(1,-1){24}}
\put(562,142){\circle*{3}}
\put(562,142){\line(1,-1){24}}
\put(410,250){$\xi_n(x)$}
\put(380,112){O}
\put(630,140){$x$}
\end{picture}\\
Let us put $Y_n=Y+\xi_n(Y)$ where the function $\xi_n(x)=\frac{[nx]}{n}-\frac{1}{2n}-x$ is periodic with period $\frac{1}{n}$ and may be written $\xi_n(x) =\frac{1}{n}\theta(nx)$ with $\theta(x)=\frac{1}{2}-\{x\}.$ Let $\mathbb{P}_Y$ be the law of $Y$, we have
\begin{thm}
 a) {\it If $\mathbb{P}_Y$ has a density,
\begin{equation}
(n(Y_n-Y),Y)\quad\stackrel{d}{\Longrightarrow}\quad(V,Y)
\end{equation}
where $V$ is uniform on $(-\frac{1}{2},\frac{1}{2})$ independent of $Y$, and for $\varphi\in\mathcal{C}^1\cap Lip$
\begin{equation}n^2\mathbb{E}[(\varphi(Y_n)-\varphi(Y))^2]\quad\rightarrow\quad\frac{1}{12}\mathbb{E}_Y[\varphi^{\prime 2}].\end{equation}}
b) If $\mathbb{P}_Y$ has a density satisfying one of the following conditions :

i) the derivative in distribution sense $\partial\mathbb{P}_Y$ is a measure $\ll\mathbb{P}_Y$ of the form $\rho\mathbb{P}_Y$ with $\rho\in L^2(\mathbb{P}_Y)$,

ii) $\mathbb{P}_Y=h.1_G\frac{dy}{|G|}$ with $G$ open set, $h\in H^1\cap L^\infty(G)$, $h>0$,\\
\noindent then hypotheses H1 to H4 are fulfilled on the algebra $\mathcal{D}=\mathcal{C}_b^2$ of bounded functions with bounded derivatives up to order 2 with $\alpha_n=n^2$ and 
$$
\begin{array}{rl}
\overline{A}[\varphi]&=\frac{1}{24}\varphi^{\prime\prime}\\
\widetilde{A}[\varphi]&=\frac{1}{24}\varphi^{\prime\prime}+\frac{1}{24}\rho\varphi^\prime\qquad\mbox{ case i)}\\
\widetilde{A}[\varphi]&=\frac{1}{24}\varphi^{\prime\prime}+\frac{1}{24}hh^\prime\varphi^\prime\qquad\mbox{ case ii).}
\end{array}
$$
\end{thm}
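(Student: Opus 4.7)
The plan rests on the \emph{arbitrary functions principle}: for any bounded Borel function $g$ of period $1$ with mean $\bar g=\int_0^1 g$ and any $h\in L^1(\mathbb{R})$,
$$
\int g(ny)h(y)\,dy \;\longrightarrow\; \bar g \int h(y)\,dy,
$$
a Riemann--Lebesgue-type statement. Every limit in the theorem will follow from this, combined with the identity $n(Y_n-Y)=\theta(nY)$ where $\theta(x)=\tfrac12-\{x\}$ has period $1$, mean $0$, $\|\theta\|_\infty\leq\tfrac12$, and $\int_0^1\theta^2=\tfrac{1}{12}$.

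For part (a), applying the principle to products $G(\theta(\cdot))H(\cdot)f_Y(\cdot)$ with $G,H$ bounded continuous gives
$$
\mathbb{E}[G(n(Y_n-Y))H(Y)]=\int G(\theta(ny))H(y)f_Y(y)\,dy\;\longrightarrow\;\Bigl(\int_{-1/2}^{1/2}G(v)\,dv\Bigr)\mathbb{E}[H(Y)],
$$
which is exactly the claimed joint convergence to $(V,Y)$ with $V$ uniform on $(-1/2,1/2)$ and independent of $Y$. For the $L^2$ asymptotic, write $\varphi(Y_n)-\varphi(Y)=(Y_n-Y)\Delta_n(Y)$ with $\Delta_n(Y)=\int_0^1\varphi'(Y+s(Y_n-Y))\,ds$ bounded by $\|\varphi'\|_\infty$ and converging a.s.\ to $\varphi'(Y)$. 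Then $n^2(\varphi(Y_n)-\varphi(Y))^2=\theta(nY)^2\Delta_n(Y)^2$; the principle applied to $g=\theta^2$ sends $\mathbb{E}[\theta(nY)^2\varphi'(Y)^2]$ to $\tfrac1{12}\mathbb{E}_Y[\varphi'^2]$, while the difference $\theta(nY)^2(\Delta_n^2-\varphi'^2)$ is bounded and tends a.s.\ to $0$, so it vanishes by dominated convergence.

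For part (b), the bilinear relations of the excerpt reduce the task to verifying (H1) and (H3). Hypothesis (H3) is the bilinear counterpart of the preceding computation: the same mean-value expansion yields
$$
n^2\mathbb{E}[(\varphi(Y_n)-\varphi(Y))(\chi(Y_n)-\chi(Y))]\;\longrightarrow\;\tfrac1{12}\mathbb{E}_Y[\varphi'\chi'],
$$
so $\widetilde{\mathcal{E}}[\varphi,\chi]=\tfrac1{24}\mathbb{E}_Y[\varphi'\chi']$. One integration by parts against $\mathbb{P}_Y$, justified exactly as in Proposition~2.3 under assumption (i) or (ii), then identifies the symmetric bias operator as $\widetilde{A}[\varphi]=\tfrac1{24}(\varphi''+\rho\varphi')$ in case (i), and the corresponding expression in case (ii). For (H1), a Taylor expansion of $\varphi\in\mathcal{C}_b^2$ to order $2$ splits
$$
n^2\mathbb{E}[(\varphi(Y_n)-\varphi(Y))\chi(Y)]=n\,\mathbb{E}[\theta(nY)\varphi'(Y)\chi(Y)]+\tfrac12\mathbb{E}[\theta(nY)^2\varphi''(Y)\chi(Y)]+R_n,
$$
where the middle term tends to $\tfrac1{24}\mathbb{E}_Y[\varphi''\chi]$ by the principle applied to $\theta^2$, and $R_n\to 0$ by dominated convergence using uniform continuity of $\varphi''$ on bounded sets.

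The main obstacle, and the only place where the density regularity truly enters, is the first term. Let $\Theta$ denote the continuous periodic antiderivative of $\theta$ normalized to mean zero; then $\Theta$ is bounded and $n\theta(ny)=\tfrac{d}{dy}\Theta(ny)$. Integration by parts gives
$$
n\int\theta(ny)\varphi'(y)\chi(y)f_Y(y)\,dy=-\int\Theta(ny)\bigl(\varphi'\chi\,f_Y\bigr)'(y)\,dy,
$$
and the weak derivative on the right lies in $L^1(dy)$ precisely thanks to hypothesis (i) or (ii): in case (i), $(\varphi'\chi f_Y)'=(\varphi'\chi)'f_Y+\varphi'\chi\,\rho f_Y$ with $\rho f_Y\in L^1$ by Cauchy--Schwarz since $\rho\in L^2(\mathbb{P}_Y)$; case (ii) is analogous via the $H^1$-regularity of $h$ on $G$. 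A final application of the arbitrary functions principle to the zero-mean function $\Theta$ then forces this integral to $0$, so $\overline{A}[\varphi]=\tfrac1{24}\varphi''$. Combined with (H3) and the relations of Section~2 this yields all four bias operators with $\alpha_n=n^2$, and the non-vanishing of $\widetilde{A}$ proves that graduation errors are strongly stochastic.
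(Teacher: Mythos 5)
Your part a) and case (i) of part b) are correct and essentially in the spirit of the paper: the paper proves a) by testing against the characters of $\mathbb{T}^1\times\mathbb{R}$ and invoking Riemann--Lebesgue, which is the same arbitrary-functions lemma you use, and it obtains the bias operators by exhibiting $\widetilde{A}$ through an integration by parts against $\mathbb{P}_Y$ exactly as in Proposition 2.3 (the paper is much terser, deferring the details to \cite{JFA} and \cite{IHP}). Your verification of (H1) is actually more explicit than anything in the paper: killing the first-order term $n\,\mathbb{E}[\theta(nY)\varphi'(Y)\chi(Y)]$ by integrating by parts against the bounded, zero-mean periodic primitive $\Theta$ of $\theta$ is legitimate under hypothesis (i), because there $\varphi'\chi f_Y\in W^{1,1}(\mathbb{R})$ with derivative $(\varphi'\chi)'f_Y+\varphi'\chi\,\rho f_Y\in L^1(dy)$, so the principle applied to $\Theta$ does send that term to $0$, and the second-order term gives $\frac1{24}\mathbb{E}_Y[\varphi''\chi]$ as claimed; together with (H3) and the algebraic relations of Section 2 this settles case (i).

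The genuine gap is case (ii), which you dispose of with ``analogous via the $H^1$-regularity of $h$''. It is not analogous: the distributional derivative of $f_Y=h\,1_G/|G|$ is not of the form $\rho\,\mathbb{P}_Y$ --- it carries singular parts on $\partial G$ --- so $\varphi'\chi f_Y\notin W^{1,1}(\mathbb{R})$, and your integration by parts either hits a measure that is not an $L^1(dy)$ density (so the arbitrary functions principle does not apply) or, if performed on $G$, produces boundary terms $\Theta(n\,\cdot)$ evaluated at $\partial G$ which merely oscillate boundedly and do not vanish. Concretely, take $Y$ uniform on $(0,1)$ (i.e. $h\equiv1$) and $\varphi$ equal to the identity near $[0,1]$: then $n\,\mathbb{E}[\theta(nY)\varphi'(Y)\chi(Y)]=\sum_{k=0}^{n-1}\int_0^1(\tfrac12-t)\chi(\tfrac{k+t}{n})\,dt\rightarrow-\tfrac1{12}\bigl(\chi(1)-\chi(0)\bigr)$, not $0$, so the first-order term survives as a boundary contribution; likewise the identification of $\widetilde{A}$ from the limit $\tfrac1{12}\mathbb{E}_Y[\varphi'\chi']$ by parts picks up $[\varphi'\chi]_{\partial G}$. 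This boundary effect is precisely why the paper does not treat (ii) by the hypothesis-(i) integration by parts but by a different tool, a Girsanov theorem for Dirichlet forms taken from \cite{IHP}; to repair your argument you would have to either restrict the test functions (support in $G$, or Neumann-type conditions at $\partial G$) or import that Girsanov-type argument, and as written your proof of case (ii) does not go through.
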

\noindent Here $\stackrel{d}{\Longrightarrow}$ denotes the weak convergence, i.e. the convergence of probability measures on continuous bounded functions, $\mathbb{E}_Y$ is the expectation under $\mathbb{P}_Y$.
\begin{proof} a) It is equivalent to study the weak convergence of  $(\frac{1}{2}+n(Y_n-Y),Y)=(\frac{1}{2}+\theta(nY),Y)$. Since $\frac{1}{2}+\theta$ takes its values in the unit interval, it is enough to study the convergence on the characters of the group $\mathbb{T}^1\times\mathbb{R}$, i.e. 
$$\mathbb{E}[e^{2i\pi k(\frac{1}{2}+\theta(nY))}e^{iuY}]=\mathbb{E}[e^{-2i\pi knY}e^{iuY}]=\Psi_Y(u-2\pi kn)$$
where $\Psi_Y$ is the characteristic function of $Y$. This tends to $\Psi(u)1_{\{k\neq0\}}$ by the Riemann-Lebesgue lemma since  $ \mathbb{P}_Y$ has a density.

If $\varphi\in\mathcal{C}^1\cap Lip$, the relation $\varphi(Y_n)-\varphi(Y)=(Y_n-Y)\int_0^1\varphi^\prime(Y+\alpha(Y_n-Y))d\alpha$ gives
$$n^2\mathbb{E}[(\varphi(Y_n)-\varphi(Y))^2]=\mathbb{E}[\theta^2(nY)\varphi^{\prime 2}(Y)]+o(1)$$ and $\mathbb{E}[\theta^2(nY)\varphi^{\prime 2}(Y)]\rightarrow\int_{-\frac{1}{2}}^{\frac{1}{2}}\theta^2(t)dt\mathbb{E}[\varphi^{\prime 2}(Y)]$ what proves the second assertion.

b) We postpone the proof of b) after theorem 4.2.
\end{proof}
\noindent{\bf Comments.} 1) The result a) is the classical {\it arbitrary functions principle} (cf. \cite{Poin} \cite{Hopf}), it would be still valid if $\mathbb{P}_Y$ were a {\it Rajchman measure} (see \cite{IHP}). For extensions of the arbitrary functions principle to infinite dimensional cases see \cite{CRAS} and \cite{IHP}. A summary of the history of this principle is given in \cite{IHP} section I.3.

2) The b) of the theorem shows that when the law $\mathbb{P}_Y$ is smooth, the approximation $Y_n$ of $Y$ to the nearest graduation  is {\it strongly stochastic}. \\

The results of theorem 4.1 extend to the finite dimensional case: Let us suppose $Y$ is $\mathbb{R}^d$-valued, measured with an equidistant graduation corresponding to an orthonormal rectilinear coordinate system, and estimated to the nearest graduation component by component. Thus we put
$$Y_n=Y+\frac{1}{n}\theta(nY)$$ with $\theta(y)=(\frac{1}{2}-\{y_1\},\cdots,\frac{1}{2}-\{y_d\})$.\\

\begin{thm} {\it a) If $\mathbb{P}_Y$ has a density and if $X$ is $\mathbb{R}^m$-valued

\begin{equation}(X,n(Y_n-Y))\quad\stackrel{d}{\Longrightarrow}\quad(X,(V_1,\ldots,V_d))\end{equation}
where the $V_i$'s are i.i.d. uniformly distributed on $(-\frac{1}{2},\frac{1}{2})$ independent of $X$.

\noindent For all $\varphi\in\mathcal{C}^1\cap Lip(\mathbb{R}^d)$
\begin{equation}
(X,n(\varphi(Y_n)-\varphi(Y)))\quad\stackrel{d}{\Longrightarrow}\quad(X,\sum_{i=1}^dV_i\varphi^\prime_i(Y))
\end{equation}
\begin{equation}
n^2\mathbb{E}[(\varphi(Y_n)-\varphi(Y))^2Ü|Y\!=\!y]\rightarrow\frac{1}{12}\sum_{i=1}^d\varphi^{\prime 2}_i(y)\qquad\mbox{ in }L^1(\mathbb{P}_Y)
\end{equation}
in particular

\begin{equation}
n^2\mathbb{E}[(\varphi(Y_n)-\varphi(Y))^2]\rightarrow\mathbb{E}_Y[\frac{1}{12}\sum_{i=1}^d\varphi^{\prime 2}_i(y)].
\end{equation}
\indent b) If $\varphi$ is of class $\mathcal{C}^2$, the conditional expectation $n^2\mathbb{E}[\varphi(Y_n)-\varphi(Y)|Y=y]$ possesses a version $n^2(\varphi(y+\frac{1}{n}\theta(ny))-\varphi(y))$ independent of the probability measure $\mathbb{P}$ which converges in the sense of distributions to the function $\frac{1}{24}\bigtriangleup\varphi$.

c) If $\mathbb{P}_Y\ll dy $ on $\mathbb{R}^d$, $\forall\psi\in L^1([0,1])$
\begin{equation}(X,\psi(n(Y_n-Y)))\quad\stackrel{d}{\Longrightarrow}(X,\psi(V)).\end{equation}

d) We consider the bias operators on the algebra $\mathcal{C}^2_b$ of bounded functions with bounded derivatives up to order 2 with the sequence $\alpha_n=n^2$. If $\mathbb{P}_Y$ has a density and if one of the following condition is fulfilled

i) $\forall i=1,\ldots,d$ the partial derivative $\partial_i\mathbb{P}_Y$ in the sense of distributions is a measure $\ll\mathbb{P}_Y$ of the form $\rho_i\mathbb{P}_Y$ with $\rho_i\in L^2(\mathbb{P}_Y)$

ii) $\mathbb{P}_Y=h1_{G}\frac{dy}{|G|}$  with $G$ open set, $h\in H^1\cap L^\infty(G)$, $h>0$

\noindent then hypotheses {\rm (H1)} to {\rm (H4)} are satisfied and 
$$\begin{array}{rl}
\overline{A}[\varphi]&=\frac{1}{24}\bigtriangleup\varphi\\
\widetilde{A}[\varphi]&=\frac{1}{24}\bigtriangleup\varphi+\frac{1}{24}\sum\varphi^\prime_i\rho_i\qquad\mbox {case i)}\\
\widetilde{A}[\varphi]&=\frac{1}{24}\bigtriangleup\varphi+\frac{1}{24}\frac{1}{h}\sum h^\prime_i\varphi^\prime_i\qquad\mbox {case ii)}\\
\Gamma[\varphi]&=\frac{1}{12}\sum \varphi^{\prime 2}_i.
\end{array}
$$}
\end{thm}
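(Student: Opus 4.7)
The plan is to mirror the structure of Theorem 4.1 in dimension $d$, replacing the scalar character analysis by its multivariate version. For part a) I would test against the characters of $\mathbb{T}^d\times\mathbb{R}^m$: since each coordinate of $\theta(nY)+\tfrac12\mathbf{1}$ lies in $[0,1]$, it suffices to verify that for each $k\in\mathbb{Z}^d$ and $u\in\mathbb{R}^m$,
$$\mathbb{E}\bigl[e^{2\pi i\langle k,\theta(nY)+\tfrac{1}{2}\mathbf{1}\rangle}e^{i\langle u,X\rangle}\bigr]=\widehat{g_u}(-2\pi nk),$$
with $g_u(y):=\mathbb{E}[e^{i\langle u,X\rangle}\mid Y=y]f(y)\in L^1(\mathbb{R}^d)$. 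Riemann--Lebesgue yields $0$ for $k\ne0$ and $\mathbb{E}[e^{i\langle u,X\rangle}]$ for $k=0$, which is exactly the character of $(X,V)$ with $V$ uniform on $(-\tfrac12,\tfrac12)^d$ independent of $X$. Statements (4.4)--(4.6) then follow from a first-order Taylor expansion, Slutsky/continuous mapping, and the elementary moments $\int_0^1\theta=0$, $\int_0^1\theta^2=\tfrac{1}{12}$ (with (4.5) read in the weak $L^1(\mathbb{P}_Y)$ sense associated with the arbitrary functions principle).

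For part b), Taylor--Lagrange gives
$$n^2\bigl(\varphi(y+\tfrac{1}{n}\theta(ny))-\varphi(y)\bigr)=n\,\nabla\varphi(y)\cdot\theta(ny)+\tfrac12\theta(ny)^tH\varphi(\xi_n(y))\theta(ny).$$
The continuous periodic primitive $\Theta(x)=\tfrac12\{x\}(1-\{x\})$ satisfies $\Theta'=\theta$ with mean $\tfrac{1}{12}$, so $n\theta(ny_i)=\partial_{y_i}\Theta(ny_i)$. Integrated against $\psi\in\mathcal{C}_c^\infty$ the linear term becomes $-\int\Theta(ny_i)\partial_{y_i}(\varphi_i'\psi)\,dy$, which by the arbitrary functions principle tends to $\tfrac{1}{12}\int\partial_{y_i}(\varphi_i'\psi)\,dy=0$. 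The quadratic term converges, again by the arbitrary functions principle (using $\int_{[0,1]^d}\theta_i\theta_j=\tfrac{\delta_{ij}}{12}$), to $\tfrac{1}{24}\Delta\varphi$ in the distribution sense.

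For part c), the push-forward of $\mathbb{P}_Y$ by $y\mapsto\{ny\}$ has density $g_n(u)=n^{-d}\sum_{k\in\mathbb{Z}^d}f((u+k)/n)$ on $[0,1]^d$, and $g_n\to1$ in $L^1([0,1]^d)$: one approximates $f\in L^1$ by bounded compactly supported densities, for which $g_n$ is a Riemann sum of $\int f=1$. Conditioning on $Y$ yields the joint statement with $X$, and the passage from bounded to $L^1$ test functions proceeds by splitting $\psi=\psi_M+\psi^M$ and exploiting the uniform integrability of $(g_n)$ that total-variation convergence supplies. For part d) I would verify (H3) directly: a first-order Taylor expansion combined with part a) (and $\mathbb{E}[V_iV_j]=\delta_{ij}/12$) gives
$$n^2\mathbb{E}[(\varphi(Y_n)-\varphi(Y))(\chi(Y_n)-\chi(Y))]\longrightarrow\tfrac{1}{12}\mathbb{E}_Y[\nabla\varphi\cdot\nabla\chi],$$
which places us in the setting of Proposition 2.3 with constant $\theta_{ij}(y)=\tfrac{\delta_{ij}}{12}$. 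Hypotheses i) and ii) are exactly what Prop 2.3 asks for, and it returns closability together with $\widetilde{A}[\varphi]=\tfrac{1}{24}\Delta\varphi+\tfrac{1}{24}\sum_j\rho_j\varphi_j'$ in case i) (and its analogue with $\rho_j=h_j'/h$ in case ii)). The operator $\overline{A}$ is obtained from a second-order Taylor in (H1): integration by parts on the linear term via $\Theta$ produces a correction that recombines with the quadratic contribution through the Green-type identity $\mathbb{E}_Y[\Delta\varphi\,\chi+\nabla\varphi\cdot\nabla\chi+\nabla\varphi\cdot\rho\,\chi]=0$ (a reformulation of $\partial_i\mathbb{P}_Y=\rho_i\mathbb{P}_Y$), leaving $\overline{A}[\varphi]=\tfrac{1}{24}\Delta\varphi$. (H2) and (H4) then follow from $\widetilde{A}=(\overline{A}+\underline{A})/2$ and $\Abar=(\overline{A}-\underline{A})/2$, and $\Gamma[\varphi]=\widetilde{A}[\varphi^2]-2\varphi\widetilde{A}[\varphi]$ collapses, after cancellation of the first-order $\rho$- (or $h'/h$-) terms, to $\Gamma[\varphi]=\tfrac{1}{12}\sum_i(\varphi_i')^2$.

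The main obstacle I foresee lies in part c): extending the convergence of integrals from $L^\infty$ to $L^1$ test functions $\psi$ requires combining the total-variation convergence $g_n\to1$ with a careful truncation of $\psi$ and uniform integrability, since $g_n$ need not be uniformly bounded in $L^\infty$. In part d), a more technical point is ensuring that $\overline{A}[\varphi]$ and $\widetilde{A}[\varphi]$ genuinely belong to $L^2(\mathbb{P}_Y)$; this rests on the $L^2(\mathbb{P}_Y)$-integrability of $\rho_j$ (respectively $h_j'/h$) baked into hypotheses i) and ii), which is precisely the regularity demanded by Proposition 2.3.
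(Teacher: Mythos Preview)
Your outline for parts a), b) and d)\,i) is essentially the paper's own argument: characters of $\mathbb{T}^d\times\mathbb{R}^m$ plus Riemann--Lebesgue for (4.3), first-order Taylor plus an $L^1$ approximation for (4.4)--(4.6), second-order Taylor plus integration by parts via the periodic primitive $\Theta$ for b), and verification of (H3) followed by Proposition~2.3 for d)\,i). Your derivation of $\overline{A}$ is more involved than necessary---the integral $-\tfrac{1}{12}\int\partial_i(\varphi_i'\chi f)\,dy$ vanishes directly as the integral of a total derivative, so no ``Green-type identity'' involving $\rho$ is needed---but the conclusion is correct.

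For part c) you take a genuinely different route. The paper does \emph{not} study the push-forward density $g_n$; instead it first fixes $\mathbb{P}_Y=1_{[0,1]^d}\,dy$, approximates $\psi$ by $\psi_k\in\mathcal{C}_b$ in $L^1$, and uses the exact periodicity of $\theta$ to compute
\[
\int_{[0,1]^d}|\psi(\theta(ny))-\psi_k(\theta(ny))|\,dy=\|\psi-\psi_k\|_{L^1}
\]
independently of $n$. This kills at once the obstacle you identify (no uniform integrability of $g_n$ is needed). The general case $\mathbb{P}_Y\ll dy$ is then reduced to the uniform case by observing that $\mathbb{P}_{\{Y\}}\ll 1_{[0,1]^d}\,dy$. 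Your $L^1$-convergence of $g_n$ is a valid alternative, but the paper's trick is shorter and sidesteps exactly the difficulty you flag.

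There is a gap in your treatment of d)\,ii). You write that ``hypotheses i) and ii) are exactly what Prop~2.3 asks for'' and reduce ii) to i) with $\rho_j=h_j'/h$. But Proposition~2.3 requires $\rho_j\in L^2(\mathbb{P}_Y)$, i.e.\ $\int_G |h_j'|^2/h\,dy<\infty$, and the hypothesis $h\in H^1\cap L^\infty(G)$, $h>0$, does not guarantee this when $h$ is not bounded away from zero. The paper does not reduce ii) to i); it invokes instead a Girsanov-type theorem for Dirichlet forms (reference \cite{IHP} in the paper), which handles the change of measure from $1_G\,dy/|G|$ to $h\,1_G\,dy/|G|$ without requiring $1/h\in L^\infty$.
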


\begin{proof} The argument for relation (4.3) is similar to one dimensional case. The relation (4.4) comes from the Taylor expansion 
$\varphi(Y_n)-\varphi(Y)=$

$=\sum_{i=1}^d (Y_{n,i}-Y_i)\int_0^1\varphi^\prime_i(Y_{n,1},\ldots,Y_{n,i-1},Y_i+t(Y_{n,i}-Y_i),Y_{i+1},\ldots,Y_d)\,dt$

\noindent and the convergence
$$(X,\sum_i\theta(nY_i)\varphi^\prime_i(Y))\quad\stackrel{d}{\Longrightarrow}\quad(X,\sum_i\varphi^\prime_i(Y)V_i)$$
thanks to (4.3) and the following approximation in $L^1$ 
$$\mathbb{E}\left|\sum_i\theta(nY_i)\varphi^\prime_i(Y)-\sum_i\theta(nY_i)\int_0^1\varphi^\prime_i(\ldots,Y_i+t(Y_{n,i}-Y_i),\ldots)dt\right|\rightarrow0.$$
To prove the formulae (4.5) and (4.6) let us remark that

\begin{flushleft}
$n^2\mathbb{E}[(\varphi(Y_n)-\varphi(Y)^2|Y=y]=$
\end{flushleft}
$$=\mathbb{E}\left[\left|\sum_i\theta(nY_i)\int_0^1\varphi^\prime_i(\ldots,Y_i+t(Y_{n,i}-Y_i),\ldots)dt\right|^2|Y=y\right]$$
$$=\left|\sum_i\theta(ny_i)\int_0^1\varphi^\prime_i(y_1+\frac{1}{n}\theta(ny_1),\ldots,y_i+t\frac{1}{n}\theta(ny_i),\ldots)dt\right|^2\quad\mathbb{P}_Y-a.s.$$
each term $(\theta(ny_i)\int_0^1\varphi^\prime_i(\ldots)dt)^2$ converges to $\int\theta^2\varphi^{\prime 2}_i(y)=\frac{1}{12}\varphi^{\prime 2}_i$ in $L^1$ and each term $\theta(ny_i)\theta(ny_j)\int_0^1\ldots\int_0^1\ldots$ goes to zero in $L^1$ what proves the a) of the statement.

The point b) is obtained following the same lines with a Taylor expansion up to second order and an integration by part thanks to the fact that $\varphi$ is now supposed to be $\mathcal{C}^2$.

In order to prove c) let us suppose first that $\mathbb{P}_Y=1_{[0,1]^d}.dy$. Considering a sequence of functions $\psi_k\in\mathcal{C}_b$ tending to $\psi$ in $L^1$ we have the bound 
$$
\begin{array}{l}
|\mathbb{E}[e^{i<u,X>}e^{iv\psi(\theta(nY))}]-\mathbb{E}[e^{i<u,X>}e^{iv\psi_k(\theta(nY))}]|\\
\leq |v|\int|\psi(\theta(ny))-\psi_k(\theta(ny))|dy\\
=|v|\sum_{p_1=0}^{n-1}\cdots\int_{p_1}^{p_1+1}\cdots|\psi(\theta(ny_1)\ldots)-\psi_k(\theta(ny_1)\ldots)|dy_1\ldots dy_d\\
=|v|\sum\cdots\sum\int\cdots\int|\psi(\theta(x_1),\ldots)-
\psi_k(\theta(x_1),\ldots)|\frac{dx_1}{n}\cdots\frac{dx_d}{n}\\
=|v|\|\psi-\psi_k\|_{L^1}.
\end{array}
$$
And this yields (4.7) in this case. Now if $\mathbb{P}_Y\ll dy$ then $\mathbb{P}_{\{Y\}}\ll dy $ on $[0,1]^d$ and the weak convergence under $dy$ on $[0,1]^d$ implies the weak convergence under $\mathbb{P}_{\{Y\}}$ what yields the result.

In d) the point i) is proved by the approach already used in \cite{JFA} consisting of proving  that hypothesis (H3) is fulfilled by displaying the operator $\widetilde{A}$ thanks to an integration by parts. The point ii) is an application of a Girsanov theorem for Dirichlet forms (cf. \cite{IHP}).\end{proof}

\section{Conclusion.}
The question of the {\it specification} of an approximate numerical result may be made more precise : it is a description of the error on the inputs in such a way that it is possible (in smooth cases) to obtain the same kind of description on the output.

$\bullet$ To give the result with an interval for the error is a specification. But it is unsatisfactory for several reasons :

(i) the law of the error may have a non compact support, or a support not decreasing to a point (cf. Polya's urn),

(ii) with such a description we can manage neither the variances nor the biases.

$\bullet$ To give the result with an interval and a probability that the error be inside this interval is also a specification.  It is a triplet
 $(x_n, \alpha, \varepsilon)$ where $x_n$ is the proposed result, with the condition
 $$\mathbb{P}\{|x-x_n|<\alpha\}\geq 1-\varepsilon.$$
 As already discussed, such a specification may be used when we are only concerned by the law of the output. If the probability 
$\mathbb{P}\{|x-x_n|<\alpha\}$ is known for every $\alpha$, this gives the knowledge of $\|x-x_n\|_{L^2(\mathbb{P})}^2$, but the critique (ii) still holds.

$\bullet$ The Dirichlet theoretical specification used in our argumentation deals with the following mathematical objects :

\hspace{.5cm}. $\mathbb{P}_Y$ the law of the quantity to be approximated,

\hspace{.5cm}. the sequence $\alpha_n$ giving the order of magnitude,

\hspace{.5cm}. the algebra $\mathcal{D}$,

\hspace{.5cm}. $\overline{A}$, $\underline{A}$ the theoretical and practical bias operators, 

\hspace{.5cm}. $\Gamma$ the square field operator of the associated Dirichlet form.

This specification seems to be too sophisticated to be used by engineers in usual cases, and the question remains to simplify it, preserving the main ideas.

Here we will just give a comment on this question in the finite dimensional case $Y=\Phi(X)$ with $\Phi$ regular from $\mathbb{R}^p$ into $\mathbb{R}^q$. If the input is measured with a graduated instrument, the square field operator on the input $\Gamma_{in}$ is yielded by the size of the graduation and do not depend on the probability law of the input provided that this law be regular, by the arbitrary functions principle. A natural hypothesis is to suppose that the law of the input is uniform in a neighbourhood of the numerical data. Then (theorem 4.2 d)) the approximation of the input satisfies
$$\overline{A}_{in}=\underline{A}_{in}=\widetilde{A}_{in}\qquad(=\frac{1}{12}\Delta)$$ and this equality will be transported to the output
$$\overline{A}_{out}=\underline{A}_{out}=\widetilde{A}_{out}$$ (see definitions H1 to H3). The generator $\widetilde{A}_{out}$ and the square field operator $\Gamma_{out}$ will be given by {\it the image} of the input Dirichlet structure by the map $\Phi$ (cf. \cite{BH} Chap V, \cite{NBlivre} Chap IV). The formulae are 
\begin{equation}
\left.
\begin{array}{rl}
(\Gamma_{out}[u])(y)&=\mathbb{E}[\Gamma_{in}[u\circ\Phi](X)|Y=y]\\
(\widetilde{A}_{out}[u])(y)&=\mathbb{E}[\widetilde{A}_{in}[u\circ\Phi](X)|Y=y]
\end{array}\right\}
\end{equation}
where $\Gamma_{in}[u\circ\Phi]$ and $\widetilde{A}_{in}[u\circ\Phi]$ are obtained by the functional calculus in Dirichlet structures (cf. \cite{BH} Chap I section 6) with natural notation this writes
\begin{equation}
\left.
\begin{array}{rl}
\Gamma_{in}[u\circ\Phi]&=(\nabla u)^t\!\circ\Phi\;\,\Gamma_{in}[\Phi]\;(\nabla u)\!\circ\Phi\\
\widetilde{A}_{in}[u\!\circ\Phi]&=(\nabla u)^t\!\circ\Phi\;\,\widetilde{A}_{in}[\Phi]+\frac{1}{2}\sum_{ij}\partial_{ij}^2u\;\Gamma_{in}[\Phi_i,\Phi_j]
\end{array}\right\}
\end{equation}
We see that, in order to obtain the coefficients of the bias differential operator $\widetilde{A}_{out}$, by formulae (5.2) we have to compute $\Gamma_{in}[\Phi_i,\Phi_j]$ and $\widetilde{A}_{in}[\Phi]$ which involves the {\it Jacobian} and the {\it Hessian} matrices of the map $\Phi$ and then by formulae (5.1) to average in $X$ on the level manifolds of $\Phi$.

In conclusion, we have attempted to convince the reader that errors have to be thought in terms of second order differential operators. In order that this language be convenient for practical engineering use, more simplicity has to be looked for, taking in account the specific form of the different problems.


\end{document}